\newenvironment{proof}[1][Proof:]{\begin{trivlist} 
\item[\hskip \labelsep {\bfseries #1}]}{\end{trivlist}} 
\def\e{\bf e}
\def\b{\bf b}
\def\p{\bf p}
\def\q{\bf q}
\def\r{\bf r}
\def\v{\bf v}
\def\Q{\bf Q}
\def\R{\bf R}
\def\A{\bf A}
\def\RR{\mathbb{R}}
\def\c{\bf c}
\def\I{\bf I}
\def\y{\bf y}
\def\s{\bf s}
\def\x{\bf x}
\def\S{\bf S}
\def\X{\bf X}
\def\T{\rm T}
\newtheorem{algorithm}{Algorithm}[section]
\newtheorem{theorem}{Theorem}[section]
\newtheorem{lemma}{Lemma}[section]
\newtheorem{remark}{Remark}[section]
\begin{document}
\title{An Efficient Polynomial Interior-Point Algorithm for Linear Programming}
\author{Yaguang Yang\thanks{NRC, Office of Research, 21 Church Street, 
Rockville, 20850. yaguang.yang@verizon.net.}}
%\journalname{Journal of Optimization Theory and Applications}

\date{\today}

\maketitle    

\begin{abstract}
For interior-point algorithms in linear programming, it is well-known that the selection of the centering parameter is crucial for proving polynomility in theory and for efficiency in practice. However, the selection of the centering parameter is usually by heuristics and separate from the selection of the line-search step size. The heuristics are quite different while developing practically efficient algorithms, such as MPC, and theoretically efficient algorithms, such as short-step path-following algorithm. This introduces a dilemma that some algorithms with the best-known polynomial bound are least efficient in practice, and some most efficient algorithms may not be polynomial. In this paper, we propose a systematic way to optimally select the centering parameter and line-search step size at the same time, and we show that the algorithm based on this strategy has the best-known polynomial bound and may be very efficient in computation for real problems.
\end{abstract}

{\bf Keywords:} Interior-point method, polynomial algorithm, linear programming.

\newpage
\section{Introduction}

Interior-point method has been a matured discipline in mathematical programming. It has been the only topic in several research monographs published in 1990s \cite{wright97,ye97,rtv97}, and it is also included in some of the most cited books in mathematical programming \cite{LY08,NW99}. But there are still some fundamental problems that need to be answered \cite{todd02}. For example, the most successful interior-point algorithm in practice is MPC which has not been proved to be polynomial although a lot of effort has been made. In fact, MPC may not be polynomial \cite{cartis09}. Therefore, a concern for simplex method \cite{kleeMinty72} remains for the state-of-the-art interior-point algorithms, i.e., the state-of-the-art interior-point algorithms may not be polynomial \cite{yang14}. In a recent paper, Salahi, Peng, and Terlaky \cite{spt07} bridges the gap between theory and practical interior-point method. The paper proposes a variant of Mehrotra's algorithms. By introducing some safeguards, the authors show that their algorithm is polynomial. 

Another troublesome phenomenon in interior-point method is that some algorithms with best polynomial bound are least efficient in practice, and some most efficient algorithms may not show the existence of a polynomial bound \cite{wright97,cartis09}. The main reason leading to this dilemma is that the selection of the centering parameter is based on heuristics while developing interior-point algorithms. To develop algorithms with the best polynomial bound, some researchers use the heuristics with the sole purpose in mind to device algorithms easy to show the low polynomial bound without considering the efficiency in practice. To develop efficient algorithms in practice, other researchers focus on the heuristics which by intuition will generate good iterates but ignore the problem of proving a polynomial bound. 

A widely used shortcut in developing interior-point algorithms is to separate the selection of the centering parameter from the selection of the line-search step size \cite{spt07,kmm93,mar90,Mehrotra92,LMS91,zhang96,yang12}. This strategy makes the problem simple to deal with but has to use heuristics in the selection of the centering parameter. Therefore, this is not an optimal strategy.

In this paper, we propose a systematic way to optimally select the centering parameters and line-search step size at the same time, aiming at minimizing the duality gap in all iterations. We show that this algorithm will have the best-known polynomial bound even though the estimation is extremely conservative. We use some Netlib test problems to demonstrate that the proposed algorithm may be very efficient compared to some well-known implementation of the most efficient algorithm such as MPC.

The remaining of the paper is organized as follows. Section 2 describes the problem. Section 3 devises the algorithm that optimally selects the centering parameter and the line-search step size by minimizing the duality gap in all iterations. We also show in this section that the algorithm has the best-known polynomial bounds. Section 4 provides some numerical test result to show that the algorithm may be very efficient. The conclusion remarks are summarized in the last section.

\section{Problem Descriptions}

Consider the Linear Programming in the standard form:
\begin{eqnarray}
\min \hspace{0.05in} {\c}^{\T}{\x}, \hspace{0.15in} \mbox{\rm subject to} 
\hspace{0.1in}  {\A}{\x}={\b}, \hspace{0.1in} {\x} \ge 0,
\label{LP}
\end{eqnarray}
where ${\A} \in {\RR}^{m \times n}$, ${\b} \in {\RR}^m $, ${\c} \in {\RR}^n$ are given, and ${\x} \in {\RR}^n$  is the vector to be optimized. Associated with the linear programming is the dual programming that is also presented in the standard form:
\begin{eqnarray}
\max \hspace{0.05in} {\b}^{\T}{\y}, \hspace{0.15in} \mbox{\rm subject to} 
\hspace{0.1in}  {\A}^{\T}{\y}+{\s}={\c}, \hspace{0.1in} {\s} \ge 0,
\label{DP}
\end{eqnarray}
where dual variable vector ${\y} \in {\RR}^m$, and dual slack vector 
${\s} \in {\RR}^n$. 
Throughout the paper, for feasible solutions of (\ref{LP}) and (\ref{DP}), we will denote the duality gap by 
\begin{equation}
u=\frac{{\x}^{\T}\s}{n},
\label{duality}
\end{equation}
the $i$th component of ${\x}$ by ${x}_i$, the Euclidean norm of ${\x}$ by $\| {\x} \|$, the identity matrix of any dimension by $\I$, the vector of all ones with appropriate dimension by $\e$, the Hadamard (element-wise) product of two vectors ${\x}$ and $\s$ by ${\x} \circ \s$, the transpose of matrix $\A$ by $\A^{\T}$, a basis for the null space of $\A$ by $\hat{\A}$. To make the notation simple for block column vectors, we will denote, for example, a point in the primal-dual problem $[{\x}^{\T}, {\y}^{\T}, \s^{\T}]^{\T}$ by $({\x},{\y}, \s)$. We will denote the initial point of any algorithm by $({{\x}}^0,{{\y}}^0, {\s}^0)$, the corresponding duality gap by $\mu_0$, the point after the $k$th iteration by $({{\x}}^k,{{\y}}^k, {\s}^k)$, the corresponding duality gap by $\mu_k$, the optimizer by $({\x}^*, {\y}^*, \s^*)$, the corresponding duality gap by $\mu_*$. For ${\x} \in {\RR}^n$, we will denote a related diagonal matrix by ${\x} \in {\RR}^{n \times n}$ whose diagonal elements are components of the vector ${\x}$.

The central-path ${\cal C}$ of the primal-dual linear programming problem is parameterized by a scalar $\tau > 0$ as follows. For each interior point $({\x}, {\y}, \s) \in {\cal C}$ on the central path, there is a $\tau > 0$ such that 
\begin{subequations}
\begin{align}
\A{\x}=\b  \label{patha} \\
\A^{\T}{\y}+\s=\c \label{pathb} \\
({\x},{\s}) > 0 \label{pathd}  \\
{x}_i{s}_i = \tau, \hspace{0.1in} i=1,\ldots,n \label{pathc}.
\end{align}
\label{centralpath}
\end{subequations}
\noindent
As $\tau \rightarrow 0$, the central path $({\x}(\tau), {\y}(\tau), 
\s(\tau))$ represented by (\ref{centralpath}) approaches to a solution of (\ref{LP}) because (\ref{centralpath}) reduces to the KKT condition as $\tau \rightarrow 0$. 

To avoid the high cost in finding the central-path, all path-following algorithms search the optimizer along a central-path neighborhood. The central-path neighborhood considered in this paper is defined as a collection of points that satisfy the following conditions,
\begin{equation}
{\cal F}^o (\theta)=\lbrace({\x}, {\y}, {\s}): {\A}{\x}={\b}, \hspace{0.01in}
{\A}^{\T}{\y}+{\s}={\c}, \hspace{0.01in} ({\x},{\s})>0, \hspace{0.01in} \| {\x} \circ {\s} - \mu {\e} \| \le 
\theta \mu \rbrace,
\label{infeasible}
\end{equation}
where $\theta \in (0, 1)$ is a fixed constant. Throughout the paper, we make the following assumptions. 
\newline
{\bf Assumptions:}
\begin{itemize}
\item[1.] $\A$ is a full rank matrix.
\item[2.] ${\cal F}^o(\theta)$ is not empty.
\end{itemize}
Assumption 1 is trivial as $\A$ can always be reduced to meet this condition in polynomial operations. Assumption 2 implies the existence of a central path.

\section{Arc-Search Algorithm for Linear Programming}

Starting from any point $({\x}^0, {\y}^0, {\s}^0)$ in a central-path neighborhood that satisfies $({\x}^0,{\s}^0)>0$ and $\| {\x}^0 {\s}^0 - \mu_0 {\e} \| \le \theta \mu$, instead of searching along the central-path, which is difficult to find in practice, we consider searching along a line inside ${\cal F}^o(\theta)$ defined as follows:
\begin{equation}
({\x}(\alpha,\sigma), {\y}(\alpha,\sigma), {\s}(\alpha,\sigma) ):= ( {{\x}}^k- \alpha \dot{{\x}}(\sigma), 
{{\y}}^k - \alpha \dot{{\y}}(\sigma), {\s}^k- \alpha \dot{\s}(\sigma)),
\label{updateXLS}
\end{equation}
where $\alpha \in [0,1]$, $\sigma \in [0,1]$, and $(\dot{{\x}}(\sigma),\dot{{\y}}(\sigma),\dot{\s}(\sigma))$ is define by 
\begin{equation}
\left[
\begin{array}{ccc}
\A & 0 & 0\\
0 & \A^{\T} & \I \\
{\S}^k & 0 & {{\X}}^k
\end{array}
\right]
\left[
\begin{array}{c}
\dot{{\x}}(\sigma) \\ \dot{{\y}}(\sigma)  \\  \dot{\s}(\sigma)
\end{array}
\right]
=\left[
\begin{array}{c}
0 \\ 0 \\ {{{\x}}^k} \circ {{\s}^k} - \sigma \mu_k \e
\end{array}
\right].
\label{tmp}
\end{equation}
Since the search stays in ${\cal F}^o (\theta)$, as ${{\x}}^k \circ {\s}^k \rightarrow 0$, (\ref{duality}) implies that $\mu_k \rightarrow 0$; hence, the iterats will approach to an optimal solution of (\ref{LP}) because (\ref{centralpath}) reduces to KKT condition. 

We will use several results that can easily be derived from
(\ref{tmp}). To simplify the notations, we will drop the superscript and subscript $k$ unless a confusion may be introduced. The first two results are from \cite{wright97}.

\begin{lemma}
Let $(\dot{{\x}}(\sigma), \dot{{\y}}(\sigma), \dot{\s}(\sigma))$ be defined in (\ref{tmp}). Then, the following relations hold.
\begin{equation}
{\s}^{\T} \dot{{\x}}(\sigma) + {{\x}}^{\T} \dot{\s}(\sigma) = {{\x}}^{\T} {\s} -\sigma \mu n,
\label{simple}
\end{equation}
\begin{equation}
\mu(\alpha, \sigma)=\frac{{\x}(\alpha,\sigma)^{\T} \s(\alpha,\sigma) }{n} = \mu (1-\alpha(1 - \sigma)).
\label{obj}
\end{equation}
\end{lemma}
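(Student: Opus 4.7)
The plan is to derive both identities directly from the linear system (\ref{tmp}) by reading off each block row and exploiting the orthogonality structure baked into the first two rows.

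For (\ref{simple}), I would look at the third block row, namely $\S\dot{\x}(\sigma)+\X\dot{\s}(\sigma)=\x\circ\s-\sigma\mu\e$, and hit both sides with $\e^{\T}$. Since $\S$ and $\X$ are diagonal with $\s$ and $\x$ on their diagonals, $\e^{\T}\S=\s^{\T}$ and $\e^{\T}\X=\x^{\T}$, while $\e^{\T}(\x\circ\s)=\x^{\T}\s$ and $\e^{\T}\e=n$. This collapses immediately to $\s^{\T}\dot{\x}(\sigma)+\x^{\T}\dot{\s}(\sigma)=\x^{\T}\s-\sigma\mu n$, which is exactly (\ref{simple}).

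For (\ref{obj}), I would expand $\x(\alpha,\sigma)^{\T}\s(\alpha,\sigma)$ using (\ref{updateXLS}):
\begin{equation*}
\x(\alpha,\sigma)^{\T}\s(\alpha,\sigma)=\x^{\T}\s-\alpha\bigl(\s^{\T}\dot{\x}(\sigma)+\x^{\T}\dot{\s}(\sigma)\bigr)+\alpha^{2}\dot{\x}(\sigma)^{\T}\dot{\s}(\sigma).
\end{equation*}
The linear-in-$\alpha$ term is handled by (\ref{simple}). The quadratic term vanishes because the first two block rows of (\ref{tmp}) give $\A\dot{\x}(\sigma)=0$ and $\dot{\s}(\sigma)=-\A^{\T}\dot{\y}(\sigma)$, so $\dot{\x}(\sigma)^{\T}\dot{\s}(\sigma)=-\dot{\x}(\sigma)^{\T}\A^{\T}\dot{\y}(\sigma)=-(\A\dot{\x}(\sigma))^{\T}\dot{\y}(\sigma)=0$. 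Substituting $\x^{\T}\s=n\mu$ and simplifying yields $\x(\alpha,\sigma)^{\T}\s(\alpha,\sigma)=n\mu\bigl(1-\alpha(1-\sigma)\bigr)$, and dividing by $n$ delivers (\ref{obj}).

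There is no real obstacle here; both identities are standard consequences of the Newton-type system. The only step that requires a moment of thought is the orthogonality $\dot{\x}(\sigma)^{\T}\dot{\s}(\sigma)=0$, which rests on the range/null-space decomposition induced by the top two block rows. Once that is in hand, the derivation is a one-line expansion plus substitution.
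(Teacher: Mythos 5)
Your proof is correct and is exactly the standard argument: the paper itself gives no proof of this lemma, simply citing it from Wright's \emph{Primal-Dual Interior-Point Methods}, and the derivation there is the same one you give (multiply the third block row by $\e^{\T}$ to get (\ref{simple}), then expand ${\x}(\alpha,\sigma)^{\T}{\s}(\alpha,\sigma)$ and kill the $\alpha^2$ term via the orthogonality $\dot{{\x}}(\sigma)^{\T}\dot{\s}(\sigma)=0$ coming from $\A\dot{{\x}}(\sigma)=0$ and $\dot{\s}(\sigma)=-\A^{\T}\dot{{\y}}(\sigma)$). Nothing is missing.
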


\begin{lemma}
Let $(\dot{{\x}}(\sigma), \dot{{\y}}(\sigma), \dot{s}(\sigma))$ be defined in (\ref{tmp}). Assume that 
$({\x}, {\y}, {\s}) \in {\cal F}^o(\theta)$. Then, the following relations hold.
\begin{equation}
{\A}{\x}(\alpha,\sigma) = {\b}, \hspace{0.1in} {A}^{\T} {\y}(\alpha,\sigma) +{\s}(\alpha,\sigma) = \c.
\end{equation}
\label{feasible}
\end{lemma}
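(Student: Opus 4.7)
The plan is to proceed by direct substitution, using the structural information supplied by the first two block rows of the defining system (\ref{tmp}) together with the feasibility of the current iterate. Concretely, because $(\x^k,\y^k,\s^k)\in\mathcal{F}^o(\theta)$, the current iterate already satisfies $\A\x^k=\b$ and $\A^\T\y^k+\s^k=\c$. Meanwhile, the first block row of (\ref{tmp}) reads $\A\dot{\x}(\sigma)=0$, and the second reads $\A^\T\dot{\y}(\sigma)+\dot{\s}(\sigma)=0$; in other words, the search direction lies in the null space of the linear-feasibility operator regardless of the value of $\sigma$.

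With those two facts in hand, I would simply substitute the update formula (\ref{updateXLS}) into each desired identity. For the primal constraint, I compute
\[
\A\x(\alpha,\sigma)=\A\x^k-\alpha\A\dot{\x}(\sigma)=\b-\alpha\cdot 0=\b.
\]
For the dual constraint, the same trick yields
\[
\A^\T\y(\alpha,\sigma)+\s(\alpha,\sigma)=(\A^\T\y^k+\s^k)-\alpha\bigl(\A^\T\dot{\y}(\sigma)+\dot{\s}(\sigma)\bigr)=\c-\alpha\cdot 0=\c.
\]
Both identities hold for every $\alpha\in[0,1]$ and every $\sigma\in[0,1]$, which is exactly the conclusion of the lemma.

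Honestly, there is no genuine obstacle here: the lemma is a linearity-plus-feasibility statement, and the entire content is that the Newton-type direction $(\dot{\x}(\sigma),\dot{\y}(\sigma),\dot{\s}(\sigma))$ was set up so that only the third block row of (\ref{tmp}) carries any nontrivial data, while the first two rows encode feasibility preservation. The only thing worth being careful about is to record explicitly that the assumption $(\x,\y,\s)\in\mathcal{F}^o(\theta)$ is used, since it supplies the base feasibility that the step affinely propagates; the nonnegativity clause of $\mathcal{F}^o(\theta)$ plays no role in this particular lemma and should not be invoked, as it is neither needed nor in general preserved for arbitrary $\alpha$.
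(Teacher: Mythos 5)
Your proof is correct and is exactly the standard argument: the paper itself gives no proof of this lemma, simply citing Wright (1997), and the cited proof is precisely your computation from the two homogeneous block rows of (\ref{tmp}) plus linearity of the update. Nothing to add.
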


Similar to the derivation of Lemma 3.5 in \cite{yang14}, we can establish the following lemma.
\begin{lemma}
Let $(\dot{{\x}}(\sigma), \dot{{\y}}(\sigma), \dot{\s}(\sigma))$ be defined in (\ref{tmp}). Then, the following relations hold.
\begin{subequations}
\begin{align}
\dot{{\x}}(\sigma) = \hat{\A}(\hat{\A}^{\T}{\S}{\X}^{-1}\hat{\A})^{-1}\hat{\A}^{\T}({\S}{\e}-\sigma {\X}^{-1} \mu \e):={\p}_x-\sigma {\q}_x, \\
\dot{\s}(\sigma) = {\A}^{\T}( {\A}{\X}{\S}^{-1}{\A}^{\T})^{-1} {\A} ({\X}\e-\sigma {\S}^{-1} \mu \e):={\p}_s-\sigma {\q}_s, \\
\dot{{\y}}(\sigma) = - ( {\A}{\X}{\S}^{-1}{\A}^{\T})^{-1} {\A} ({\X}{\e}-\sigma {\S}^{-1} \mu {\e}),
\end{align}
\label{dotXdotS}
\end{subequations}
where 
\begin{eqnarray} \nonumber
{\p}_x=\hat{\A}(\hat{\A}^{\T}{\S}{\X}^{-1}\hat{\A})^{-1}\hat{\A}^{\T}{\S}\e, \hspace{0.1in}
{\q}_x=\mu \hat{\A}(\hat{\A}^{\T}{\S}{\X}^{-1}\hat{\A})^{-1}\hat{\A}^{\T}{\X}^{-1} {\e}, \\ \nonumber
{\p}_s={\A}^{\T}( {\A}{\X}{\S}^{-1}{\A}^{\T})^{-1} {\A} {\X}{\e}, \hspace{0.1in}
{\q}_s=\mu {\A}^{\T}( {\A}{\X}{\S}^{-1}{\A}^{\T})^{-1} {\A} {\S}^{-1} {\e}.
\label{pxqxpsqs}
\end{eqnarray}
\end{lemma}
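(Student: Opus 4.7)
The plan is to treat (\ref{tmp}) as a block linear system and eliminate variables using the null-space/range structure of $\A$. The first block row of (\ref{tmp}) says $\A\dot{\x}(\sigma)=0$, so $\dot{\x}(\sigma)$ lies in the null space of $\A$ and can be written as $\dot{\x}(\sigma)=\hat{\A}\v$ for some vector $\v$. The second block row says $\dot{\s}(\sigma)=-\A^{\T}\dot{\y}(\sigma)$, so $\dot{\s}(\sigma)$ lies in the range of $\A^{\T}$. Since $\hat{\A}$ spans the null space of $\A$, we have $\A\hat{\A}=0$ and hence $\hat{\A}^{\T}\A^{\T}=0$; these two orthogonality relations are the key algebraic facts that let me decouple $\dot{\x}(\sigma)$ from $\dot{\s}(\sigma)$.

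To extract $\dot{\x}(\sigma)$, I would start from the third block row $\S\dot{\x}(\sigma)+\X\dot{\s}(\sigma)=\x\circ\s-\sigma\mu\e$, left-multiply by $\X^{-1}$ to get $\X^{-1}\S\dot{\x}(\sigma)+\dot{\s}(\sigma)=\S\e-\sigma\mu\X^{-1}\e$ (using $\x\circ\s=\X\S\e$), and then left-multiply by $\hat{\A}^{\T}$. The $\dot{\s}(\sigma)$ term vanishes because $\hat{\A}^{\T}\A^{\T}=0$, leaving $\hat{\A}^{\T}\X^{-1}\S\hat{\A}\,\v=\hat{\A}^{\T}(\S\e-\sigma\mu\X^{-1}\e)$. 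Inverting the (symmetric positive definite) matrix on the left and substituting back into $\dot{\x}(\sigma)=\hat{\A}\v$ gives the claimed formula, which then splits cleanly into the $\p_x-\sigma\q_x$ decomposition once $\mu$ is pulled out of the $\sigma$-term.

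For $\dot{\y}(\sigma)$ and $\dot{\s}(\sigma)$, the dual maneuver is to introduce $\u=-\dot{\y}(\sigma)$, so the second equation gives $\dot{\s}(\sigma)=\A^{\T}\u$. Left-multiplying the third row by $\S^{-1}$ yields $\dot{\x}(\sigma)+\S^{-1}\X\A^{\T}\u=\X\e-\sigma\mu\S^{-1}\e$, and then left-multiplying by $\A$ annihilates $\dot{\x}(\sigma)$ using $\A\dot{\x}(\sigma)=0$. This produces $\A\X\S^{-1}\A^{\T}\u=\A(\X\e-\sigma\mu\S^{-1}\e)$, from which $\u$ is obtained by inverting the Schur complement $\A\X\S^{-1}\A^{\T}$; substituting back recovers both $\dot{\y}(\sigma)$ and $\dot{\s}(\sigma)=\A^{\T}\u$ in the stated form, and again the linearity in $\sigma$ immediately yields the $\p_s-\sigma\q_s$ splitting.

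None of these steps presents a genuine obstacle; the derivation is routine once one commits to the null-space parametrization $\dot{\x}(\sigma)=\hat{\A}\v$ and the range parametrization $\dot{\s}(\sigma)=\A^{\T}\u$. The only thing to check carefully is that $\hat{\A}^{\T}\S\X^{-1}\hat{\A}$ and $\A\X\S^{-1}\A^{\T}$ are indeed invertible, which follows from $(\x,\s)>0$ and the full-rank Assumption 1, together with $\hat{\A}$ having full column rank. The identification of the $\p$'s and $\q$'s at the end is purely cosmetic separation of the $\sigma$-independent and $\sigma$-dependent pieces.
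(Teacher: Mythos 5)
Your proposal is correct and follows essentially the same route as the paper: both rest on the parametrizations $\dot{\x}(\sigma)=\hat{\A}\v$, $\dot{\s}(\sigma)=-\A^{\T}\dot{\y}(\sigma)$ and annihilation via $\A\hat{\A}=0$, the only difference being that the paper packages your two separate normal-equation eliminations into a single explicit left inverse of the $n\times n$ matrix $\left[\X^{-1}\hat{\A},\,-\S^{-1}\A^{\T}\right]$. Your invertibility remark about $\hat{\A}^{\T}\S\X^{-1}\hat{\A}$ and $\A\X\S^{-1}\A^{\T}$ is a welcome detail the paper glosses over.
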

\begin{proof}
From the first two rows of (\ref{tmp}), we have, for some vector ${\v}$,
\begin{equation}
{\X}^{-1}\dot{{\x}}(\sigma) = {\X}^{-1}\hat{\A} {\v}, \hspace{0.1in} {\S}^{-1}\A^{\T} \dot{{\y}} (\sigma) +\S^{-1} \dot{\s}(\sigma) =0.
\label{tmp2}
\end{equation}
From the third row of (\ref{tmp}), we have,
\[
{\X}^{-1} \dot{{\x}}(\sigma) + \S^{-1} \dot{\s}(\sigma) = \e -\sigma \mu {\X}^{-1} \S^{-1} \e.
\]
Substituting the first two equations into the last equation and writing the result as a matrix form yield
\[
\left[ {\X}^{-1} \hat{\A}, -{\S}^{-1}\A^{\T} \right] \left[  \begin{array}{c} {\v} \\ \dot{{\y}}(\sigma) \end{array} \right]
={\e} -\sigma \mu {\X}^{-1} \S^{-1} {\e}.
\]
Since ${\A}$ is full rank, we have 
\[
\left[ \begin{array}{c}
(\hat{\A}^{\T}{\S}{\X}^{-1}\hat{\A})^{-1}\hat{\A}^{\T}{\S} \\ 
-(\A{\X}{\S}^{-1}\A^{\T})^{-1}{\A}{\X} \end{array} \right]
\left[ {\X}^{-1}\hat{\A}, -{\S}^{-1}{\A}^{\T} \right] = {\I}.
\]
This gives
\[
\left[  \begin{array}{c} {\v} \\ \dot{{\y}}(\sigma) \end{array} \right]
=\left[ \begin{array}{c}
(\hat{\A}^{\T}{\S}{\X}^{-1}\hat{\A})^{-1}\hat{\A}^{\T}{\S} \\ 
-(\A{\X}{\S}^{-1}\A^{\T})^{-1}{\A}{\X} \end{array} \right] ({\e} -\sigma \mu {\X}^{-1} {\S}^{-1} {\e}).
\]
Substituting this equation into (\ref{tmp2}) proves the result.
{\hfill \ensuremath{\Box}}
\end{proof}
Since
\[
\dot{{\x}}(\sigma) \circ \dot{\s}(\sigma)  = ({\p}_x-\sigma {\q}_x)  \circ ({\p}_s-\sigma {\q}_s) =
{\p}_x \circ {\p}_s -\sigma ({\q}_x\circ {\p}_s+{\p}_x\circ {\q}_s )+\sigma^2 {\q}_x\circ {\q}_s
:= {\p} -\sigma {\q} +\sigma^2 {\r},
\]
where 
\begin{equation}
{\p}={\p}_x \circ {\p}_s, \hspace{0.1in} {\q}={\q}_x\circ {\p}_s+{\p}_x \circ {\q}_s, \hspace{0.1in} {\r}={\q}_x \circ {\q}_s,
\label{pqr}
\end{equation}
to make sure that
$({\x}(\alpha,\sigma), {\y}(\alpha,\sigma), {\s}(\alpha,\sigma))$ stays in ${\cal F}^o (\theta)$, we need to find some $\bar{\alpha}$ such that for $\forall \alpha \in (0, \bar{\alpha}]$, the following inequality holds.
\begin{eqnarray} \nonumber
& & \| {\x}(\alpha,\sigma) \circ {\s}(\alpha,\sigma) - \mu(\alpha,\sigma) {\e} \|
= \| (1-\alpha) ({\x} \circ {\s} - \mu {\e})+ \alpha^2 ({\p} -\sigma {\q} +\sigma^2 {\r}) \|
\\ 
& \le & \theta \mu(\alpha, \sigma) = \theta \mu (1-\alpha(1 - \sigma)).
\label{proximity}
\end{eqnarray}
Assuming $\| {\x}  \circ {\s}  - \mu {\e} \| \le \theta \mu$, equation (\ref{proximity}) holds if
\begin{equation}
\| {\p} -\sigma {\q} +\sigma^2 {\r} \|^2  \le \frac{\theta^2 \sigma^2 \mu^2}{\alpha^2}.
\label{mainCondition}
\end{equation}
This is a quartic polynomial (in terms of $\sigma$) inequality constraint which can be written as
\begin{equation}
f(\sigma, \alpha):=a_4 \sigma^4 - a_3 \sigma^3 + 
\left(a_2 - \frac{\theta^2 \mu^2}{\alpha^2} \right)\sigma^2
- a_1 \sigma + a_0 \le 0, 
\label{mainCondition1}
\end{equation}
with 
\begin{equation}
a_0={\p}^{\T}{\p} \ge 0, \hspace{0.01in} a_1={\q}^{\T} {\p} +{\p}^{\T} {\q}, \hspace{0.01in} a_2={\p}^{\T}{\r}+{\r}^{\T}{\p}+{\q}^{\T}{\q}, \hspace{0.01in} a_3={\q}^{\T}{\r} + {\r}^{\T}{\q}, \hspace{0.01in} a_4={\r}^{\T}{\r} \ge 0.
\label{a04}
\end{equation} 
Here $a_i$, $i=0,1,2,3,4$, are all known constants since they are functions of ${\x}$ and ${\s}$ which are known at the beginning of every iteration. 

It is important to note that $f(\sigma, \alpha)$ is a monotonically increasing function of $\alpha$. Therefore, for any fixed $\sigma \in [0,1]$, if for some $\bar{\alpha}$, $ f(\sigma, \bar{\alpha}) \le 0$ holds, then $ f(\sigma, {\alpha}) \le 0$ holds for $\forall \alpha \in (0, \bar{\alpha}]$. Using the relation that $\| {\x}(\alpha,\sigma) \circ {\s}(\alpha,\sigma) - \mu(\alpha,\sigma) {\e} \| \le \theta \mu(\alpha, \sigma)$, we have $ {\x}_i(\alpha, \sigma){\s}_i(\alpha, \sigma) \ge (1-\theta) \mu(1-\alpha(1-\sigma)) >0$ for all $\forall \alpha \in (0, \bar{\alpha}]$. This means that $({\x}(\alpha,\sigma), {\s} (\alpha, \sigma)) > 0$ for all $\forall \alpha \in (0, \bar{\alpha}]$. Therefore, in the remaining discussions, we simply use $\alpha$ instead of $\bar{\alpha}$.

Assuming that the initial point $({\x}^0, {\y}^0, {\s}^0) \in {\cal F}^o(\theta)$, then in each iteration we want to minimize the duality gap $\mu(\alpha, \sigma)$ under the constraint that $({\x}(\alpha,\sigma), {\y}(\alpha, \sigma), {\s}(\alpha,\sigma)) \in {\cal F}^o(\theta)$.
Because of Lemma \ref{feasible}, the selection of $\alpha$ and $\sigma$ in each iteration is reduced to the following optimization problem.
\begin{eqnarray}\nonumber
\min_{\alpha, \sigma} & & \mu (1-\alpha(1-\sigma)) \\ 
s.t. & & 0 \le \alpha \le 1, \hspace{0.1in} 0 \le \sigma \le 1, \hspace{0.1in} f(\sigma, \alpha) \le 0.
\label{opt}
\end{eqnarray}
Since $0 \le \alpha \le 1$ and $0 \le \sigma \le 1$, we have $0 \le \alpha(1-\sigma) \le 1$. 
i.e., $0 \le (1-\alpha(1-\sigma)) \le 1$. This means that
$0 \le \mu(\alpha,\sigma) = \mu (1-\alpha(1-\sigma)) \le \mu$. 
Clearly, if $a_0=0$, then, the optimization problem has a solution of $\sigma=0$ and $\alpha=1$ with the 
objective funtion $\mu(\alpha, \sigma)=0$. One iteration will find the solution of (\ref{LP}).
Therefore, in the rest discussions, we do not consider this simple case. Instead, we assume that $a_0>0$ holds in all the iterations.
Let the Lagrange function be defined as follows.
\[
L = \mu(1-\alpha(1-\sigma))-\nu_1 \alpha -\nu_2 (1-\alpha) -\nu_3 \sigma -\nu_4 (1-\sigma)
+\nu_5 f(\sigma,\alpha),
\]
where $\nu_i$, $i=1,2,3,4,5$, are Lagrange multipliers. The KKT conditions for Problem (\ref{opt}) are as follows.
\begin{subequations}
\begin{align}\label{a}
\frac{\partial L}{\partial \alpha} = -(1-\sigma)\mu -\nu_1 +\nu_2 +2 \nu_5 
\frac{\sigma^2 \theta^2 \mu^2}{\alpha^3}=0, \\ \label{b}
\frac{\partial L}{\partial \sigma} = \alpha \mu -\nu_3+\nu_4 
+\nu_5\left( 4a_4\sigma^3-3a_3\sigma^2+2\left( a_2-\frac{\theta^2 \mu^2}{\alpha^2} \right)\sigma-a_1 \right) =0, \\ \label{c}
\nu_1 \ge 0, \hspace{0.01in} \nu_2 \ge 0, \hspace{0.01in}
\nu_3 \ge 0, \hspace{0.01in} \nu_4 \ge 0, \hspace{0.01in} \nu_5 \ge 0, \\ \label{d}
\nu_1 \alpha=0, \hspace{0.01in} \nu_2 (1-\alpha)=0, \hspace{0.01in} 
\nu_3 \sigma=0, \hspace{0.01in} \nu_4 (1-\sigma)=0, \hspace{0.01in} 
\nu_5 f(\sigma,\alpha) =0, \\ \label{e}
0 \le \alpha \le 1, \hspace{0.1in} 0 \le \sigma \le 1, \hspace{0.1in} f(\sigma, \alpha) \le 0.
\end{align}
\label{lagrange}
\end{subequations}
Relations in (\ref{lagrange}) can be simplified because of the following claims.
\begin{itemize}
\setlength{\itemindent}{2.8em}
\item[Claim 1]: $\alpha \neq 0$. Otherwise, $\mu(\alpha, \sigma)=\mu$ will be the maximum.
\item[Claim 2]: $\nu_1 = 0$ because of (\ref{d}).
\item[Claim 3]: $\sigma \neq 1$. Otherwise, $\mu(\alpha, \sigma)=\mu$ will be the maximum.
\item[Claim 4]: $\nu_4 = 0$ because of (\ref{d}).
\item[Claim 5]: $\sigma \neq 0$. Otherwise (\ref{mainCondition1}) does not hold since $a_0=p^{\T}p >0$ is assumed.
\item[Claim 6]: $\nu_3 = 0$ because of (\ref{d}).
\end{itemize}
Therefore, we can rewrite the KKT conditions as follows.
\begin{subequations}
\begin{align}\label{aa}
(\sigma -1)\mu +\nu_2 +2 \nu_5 
\frac{\sigma^2 \theta^2 \mu^2}{\alpha^3}=0, \\ \label{bb}
\alpha \mu +\nu_5\left( 4a_4\sigma^3-3a_3\sigma^2+2\left( a_2-\frac{\theta^2 \mu^2}{\alpha^2} \right)\sigma-a_1\right) =0, \\ \label{cc}
\nu_2 \ge 0, \hspace{0.01in} \nu_5 \ge 0, \\ \label{dd}
\nu_2 (1-\alpha)=0, \hspace{0.01in} 
\nu_5 f(\sigma,\alpha) =0, \\ \label{ee}
0 < \alpha \le 1, \hspace{0.1in} 0 < \sigma < 1, \hspace{0.1in} f(\sigma, \alpha) \le 0.
\end{align}
\label{KKT}
\end{subequations}
Notice that $f(\sigma, 1)<0$ cannot hold for all $\sigma \in (0,1)$, otherwise let $\sigma \rightarrow 0$, then $f(\sigma, 1) \rightarrow p^{\T}p >0$. Therefore, we divide our discussion into two cases.

Case 1: $f(\sigma, 1)=0$ has solution(s) in $\sigma \in (0,1)$. First, in view of the fact that $f(0,1)={\p}^{\T}{\p}>0$, it is straightforward to check that the smallest solution of $f(\sigma,1)=0$ in $\sigma \in (0,1)$ and $\alpha =1$ is a feasible solution and a candidate of the optimal solution that minimizes $\mu(\alpha, \sigma)=\mu(1-\alpha(1-\sigma))$ under all the constraints. Then, let us consider other feasible solutions which meet KKT condition but $\alpha<1$. Since $\alpha \neq 1$, we conclude that $\nu_2 =0$ from (\ref{dd}). From (\ref{aa}), we have
\[
\nu_5 =\frac{(1-\sigma)\alpha^3}{2 \mu \sigma^2 \theta^2} \neq 0.
\]
The last relation follows from the facts that $\alpha \neq 0$ and $\sigma \neq 1$. Substituting $\nu_5$ into (\ref{bb}) yields
\begin{eqnarray} 
\mu +\frac{(1-\sigma)\alpha^2}{2 \mu \sigma^2 \theta^2} \left( 4a_4\sigma^3-3a_3\sigma^2+2\left( a_2-\frac{\theta^2 \mu^2}{\alpha^2} \right)\sigma-a_1) \right) =0.
\label{tmp1}
\end{eqnarray}
Since $\nu_5 \neq 0$, from (\ref{dd}), we have
\[
f(\sigma,\alpha) = a_4 \sigma^4 - a_3 \sigma^3 + 
\left(a_2 - \frac{\theta^2 \mu^2}{\alpha^2} \right)\sigma^2
- a_1 \sigma + a_0=0,
\]
which gives, 
\begin{equation}
\frac{\theta^2 \mu^2 
}{\alpha^2 }\sigma^2 = a_4 \sigma^4 - a_3 \sigma^3 + 
a_2 \sigma^2- a_1 \sigma + a_0:=h(\sigma)> 0.
\label{hfunction}
\end{equation}
Substituting this relation into (\ref{tmp1}) and simplifying the result yield
\begin{equation}
g(\sigma):=(2a_4 -a_3)\sigma^4 +(2a_2-a_3)\sigma^3-3a_1\sigma^2+(4a_0+a_1)\sigma-2a_0=0. 
\label{gfunction}
\end{equation}
For all $\sigma \in (0,1)$ such that $g(\sigma)=0$, we can calculate $h(\sigma)=a_4 \sigma^4 - a_3 \sigma^3 + 
a_2 \sigma^2- a_1 \sigma + a_0$, and find
\begin{equation}
\alpha = \frac{\theta \mu \sigma}{\sqrt{h(\sigma)}}.
\label{alphaV}
\end{equation}
For all pairs $(\sigma, \alpha) \in (0,1) \times (0,1)$ obtained this way, they are candidates of the optimal solutions of (\ref{opt}).

Case 2: $f(\sigma, 1)>0$ for all $\sigma \in (0,1)$. For any fixed $\sigma$, since $f(\sigma, \alpha)$ is a monotonic increasing function of $\alpha$ and $f(\sigma, 0) = -\infty$, there exists an $\alpha \in (0,1)$ such that $f(\sigma, \alpha)=0$. It is easy to see that $\alpha \neq 1$ (otherwise the constraint $f(\sigma, \alpha) \le 0$  will not hold). Therefore, all arguments for $\alpha \neq 1$ in Case 1 apply here. Furthermore, in this case, we have a stronger condition than (\ref{hfunction}), i.e.,
\begin{equation}
\frac{\theta^2 \mu^2 
}{\alpha^2 }\sigma^2 = a_4 \sigma^4 - a_3 \sigma^3 + 
a_2 \sigma^2- a_1 \sigma + a_0:=h(\sigma)> f(\sigma,1)>0, \hspace{0.1in} \forall \sigma \in (0,1).
\label{case2}
\end{equation}
In view of the facts that $g(0)=-2a_0<0$ and $g(1)=2(a_4-a_3+a_2-a_1+a_0)=2h(1)>0$, 
$g(\sigma)=0$ has solution(s) in $\sigma \in (0,1)$.

For any candidate pair $(\sigma, \alpha)$ of the optimal solution obtained in Cases 1 and 2, we use (\ref{obj}) to calculate $\mu(\alpha, \sigma)$ for all candidate pairs. The smallest $\mu(\alpha, \sigma)$ among all candidate pairs $(\sigma, \alpha)$ is the solution of (\ref{opt}). Now we are ready to present the algorithm.

\begin{algorithm} {\ } \\ 
Data: $\A$, $\b$, $\c$, $\hat{\A}$.  Parameters: $\theta \in (0,1)$. 
Iinitial point: $({\x}^0, {\y}^0, {\s}^0) \in {\cal F}^0$,
and ${\mu}_{0}=\frac{{{\x}}^{{0}^{\T}}{\s}^{0}}{n}$.
\newline
{\bf for} iteration $k=0,1,2,\ldots$
\begin{itemize}
\setlength{\itemindent}{2.8em}
\item[Step 1:] Calculate ${\p}_x$, ${\q}_x$, ${\p}_s$, ${\q}_s$, $\dot{{\x}}(\sigma)$, $\dot{{\y}}(\sigma)$, and $\dot{\s}(\sigma)$ using (\ref{dotXdotS}); ${\p}$, ${\q}$, and ${\r}$ using (\ref{pqr}); $a_0$, $a_1$, $a_2$, $a_3$, and $a_4$ using (\ref{a04}).
\item[Step 2:] Select $\alpha$ and $\sigma$ as follows.
\begin{enumerate}
\item If $a_0 = 0$     \\
	set $\sigma=0$ and $\alpha=1$.
\item else $a_0 > 0$  
\begin{enumerate}
\item Solve $f(\sigma,1)=0$. If $f(\sigma,1)$ has solution(s) in $\sigma \in (0,1)$, the smallest solution $\sigma \in (0,1)$ and $\alpha=1$ is a candidate of optimal solution.
\item Solve $g(\sigma)=0$. If $g(\sigma)$ has solutions in $\sigma \in (0,1)$, calculate $h(\sigma)$ and $\alpha$ using (\ref{hfunction}) and (\ref{alphaV}); for each pair of $(\sigma, \alpha)$, if the pair meets $0< \sigma <1$ and $0< \alpha <1$, the pair is a candidate of solution.
\item Calculate $\mu(\alpha, \sigma)$ using (\ref{obj}) for all candidate pairs; select $\sigma$ and $\alpha$ that generate the smallest $\mu(\alpha, \sigma)$.
\end{enumerate}
\end{enumerate}
\item[Step 3:] Set $({\x}(k+1), {\y}(k+1), {\s}(k+1))=({\x}-\alpha \dot{{\x}}(\sigma), {\y} -\alpha \dot{{\y}}(\sigma), {\s}-\alpha \dot{\s}(\sigma))$.
\end{itemize}
{\bf end (for)} 
\label{mainAlgo}
\end{algorithm}

\begin{remark}
The most expensive computations are in Step 1, which involve matrix inverse and products of matrices and vectors. It is worthwhile to note that the update of ${\y}$ is not necessary but it is included. The computations in Step 2 involve the quartic polynomial solutions of $f(\sigma,1)$ and $g(\sigma)$ which are negligible \cite{Shmakov11}. The computational details for quartic solution are described in \cite{yang15}.
\end{remark}

\begin{remark}
In the proof of the polynomiality of the short-step path-following algorithm, the condition 
\begin{equation}
\| {\p} -\sigma {\q} +\sigma^2 {\r} \|  \le {\theta \sigma \mu}, 
\label{weakCondition}
\end{equation}
is proved to hold when $\sigma=1-0.4/\sqrt{n}$ and $\alpha=1$ are selected \cite[(5.14) in Lemma 5.5, (5.15) and (5.16) in Theorem 5.6]{wright97}. But this selection is obviously not as good as the selection in Step 2: 2. (a) of Algorithm \ref{mainAlgo}.
Clearly, if $\alpha=1$, (\ref{weakCondition}) is equivalent to (\ref{mainCondition}); if $\alpha<1$, the constraint (\ref{mainCondition}) is less restrict than (\ref{weakCondition}) (allows more choices of $\alpha$ and $\sigma$). Because of 
the additional choices in the relaxed constraint, and because of the choice of $(\sigma, \alpha)$ in Algorithm~\ref{mainAlgo} is optimal, the reduction of the duality gap in every iteration of Algorithm~\ref{mainAlgo} is more than the reduction in the short-step path-following algorithm (the latter is $\mu_{k+1} = (1-0.4/\sqrt{n}) \mu_k$). Notice that the polynomial bound for the short-step path-following algorithm is ${\cal O}(\sqrt{n} \log(\frac{1}{\epsilon}))$, the polynomial bound of Algorithm~\ref{mainAlgo} is at least the same as or better than ${\cal O}(\sqrt{n} \log(\frac{1}{\epsilon}))$.
\end{remark}

We summarize the discussion in this section into the following theorem.

\begin{theorem}
Algorithm ~\ref{mainAlgo} is convergent with the polynomial bound at least the same as or better than ${\cal O}(\sqrt{n} \log(\frac{1}{\epsilon}))$.
\label{mainTheorem}
\end{theorem}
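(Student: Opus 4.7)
The plan is to argue that one iteration of Algorithm~\ref{mainAlgo} never does worse than one iteration of the classical short-step path-following method, and then to chain the resulting per-iteration reduction into the standard polynomial complexity bound. The step-by-step proposal is as follows.

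First, I would make precise what ``optimal'' means here: by construction of Step 2, Algorithm~\ref{mainAlgo} enumerates all candidate KKT pairs $(\sigma,\alpha)$ of the optimization problem (\ref{opt}), namely the candidate from Case~1 (the smallest root of $f(\sigma,1)=0$ paired with $\alpha=1$) and the candidates from Case~2 (roots of $g(\sigma)=0$ combined with $\alpha$ given by (\ref{alphaV})), and then selects the pair minimizing $\mu(\alpha,\sigma)=\mu(1-\alpha(1-\sigma))$. Since every feasible $(\sigma,\alpha)\in[0,1]\times[0,1]$ satisfying $f(\sigma,\alpha)\le 0$ is dominated by the best KKT candidate, we have
\[
\mu_{k+1}^{\text{algo}} \;\le\; \mu\bigl(1-\alpha(1-\sigma)\bigr)
\]
for \emph{any} admissible pair $(\sigma,\alpha)$. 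Moreover, since the iterate produced by the chosen pair still lies in $\mathcal F^o(\theta)$ (by the discussion around (\ref{mainCondition1})), the algorithm is well-defined for every $k$.

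Next, I would exhibit one specific admissible pair, namely the short-step choice $\sigma_* = 1-0.4/\sqrt{n}$, $\alpha_*=1$. By Lemma~5.5 and Theorem~5.6 of Wright~\cite{wright97} (cited in Remark~3.2 as (\ref{weakCondition})), whenever $({\x},{\y},{\s})\in\mathcal F^o(\theta)$ with $\theta$ small enough (the usual $\theta=0.4$ suffices), one has $\|{\p}-\sigma_*{\q}+\sigma_*^2{\r}\|\le\theta\sigma_*\mu$, which is precisely (\ref{mainCondition}) with $\alpha=1$ and therefore (\ref{mainCondition1}) with $\alpha=1$. Hence $(\sigma_*,1)$ is admissible for (\ref{opt}), and combining this with the previous paragraph gives
\[
\mu_{k+1}^{\text{algo}} \;\le\; \mu_k\bigl(1-\alpha_*(1-\sigma_*)\bigr) \;=\; \Bigl(1-\tfrac{0.4}{\sqrt{n}}\Bigr)\mu_k .
\]

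Iterating this bound yields $\mu_k \le (1-0.4/\sqrt n)^k \mu_0$. To convert this into the iteration-complexity statement, I would take logarithms, use the elementary inequality $\log(1-x)\le -x$ for $x\in(0,1)$, and impose $\mu_k \le \epsilon\mu_0$. A straightforward calculation then gives
\[
k \;\ge\; \frac{\sqrt n}{0.4}\log\!\Bigl(\frac{1}{\epsilon}\Bigr)
\;\Longrightarrow\; \mu_k \le \epsilon\mu_0 ,
\]
which is the $\mathcal{O}(\sqrt n\log(1/\epsilon))$ bound. Convergence of $({\x}^k,{\y}^k,{\s}^k)$ to an optimal primal-dual pair of (\ref{LP})--(\ref{DP}) then follows from the standard neighborhood argument: all iterates stay in $\mathcal F^o(\theta)$ by Lemma~\ref{feasible} and the remark after (\ref{mainCondition1}), $\mu_k\to 0$, and therefore (\ref{centralpath}) forces the limit to satisfy the KKT conditions.

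The main obstacle I anticipate is justifying the comparison step cleanly: one must be sure that the enumeration in Step~2 of Algorithm~\ref{mainAlgo} truly covers the global minimizer of (\ref{opt}), so that bounding the objective at the short-step pair also bounds the algorithm's actual iterate. The Case~1/Case~2 dichotomy, together with the KKT simplification via Claims 1--6, is what rules out missed minimizers (boundary cases $\alpha=0$, $\sigma\in\{0,1\}$ are excluded, and the only remaining stationarity conditions are solved). Once that coverage is granted, the complexity estimate is just the standard short-step bookkeeping and is unconditional.
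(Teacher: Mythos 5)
Your proposal is correct and follows essentially the same route as the paper, whose entire argument for Theorem~\ref{mainTheorem} is the comparison in Remark~3.2: the short-step pair $(\sigma_*,\alpha_*)=(1-0.4/\sqrt{n},\,1)$ is admissible for (\ref{opt}), the algorithm's choice is at least as good, and the standard geometric-decrease bookkeeping yields ${\cal O}(\sqrt{n}\log(1/\epsilon))$. If anything, you are more careful than the paper in flagging that admissibility of the short-step pair requires $\theta$ small enough and that Step~2's enumeration must actually cover (or at least dominate) that pair.
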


\section{Implementation and numerical test}

Algorithm \ref{mainAlgo} is implemented in MATLAB and test is conducted for Netlib test problems. We provide the implementation details and discuss the test result in this section.

\subsection{implementation}

Algorithm \ref{mainAlgo} is presented in a simple form which is convenient for analysis. Some implementation details are provided here. 

First, to have a large step size, we need to have a large central-path neighborhood, therefore, parameter $\theta=0.99$ is used. Second, the program needs a stopping criterion to avoid an infinity loop, the code stops if 
\[
%\frac{\|r_B\|}{\max \lbrace 1, \| {\b} \| \rbrace }
%+\frac{\|r_C\|}{\max \lbrace 1, \| {\c} \|  \rbrace }+
\frac{ \mu }{\max \lbrace 1, \| {\c}^{\T}{\x} \|, \|{\b}^{\T}{\y}\|  \rbrace } 
< 10^{-8}
\] 
%or 
%\[
%\mu < 10^{-8}
%\]
holds, which is similar to the stopping criterion of {\tt linprog} \cite{zhang96}. 

Our experience shows when iterations approach an optimal point, some ${x}_i$ and/or ${s}_j$ approach to zero, which introduces large numerical error in the matrix inverses of (\ref{dotXdotS}). Therefore, the following alternative formulas are used to replace (\ref{dotXdotS}). Using the QR decomposition, we can write
\[
{\X}^{-0.5}{\S}^{0.5}\hat{\A} = {\Q}_1{\R}_1,
\]
where ${\Q}_1$ is an orthonormal matrix in ${\RR}^{n \times (n-m)}$, and ${\R}_1$ is an invertible triangle matrix in ${\RR}^{(n-m) \times (n-m)}$. Then, we have
\begin{eqnarray}
& & \hat{\A}\left( \hat{\A}^{\T}{\S}{\X}^{-1}\hat{\A} \right)^{-1} \hat{\A}^{\T}
\\ \nonumber
& = & {\X}^{0.5}{\S}^{-0.5} \left( {\X}^{-0.5}{\S}^{0.5}\hat{\A}\left( \hat{\A}^{\T}{\S}{\X}^{-1}\hat{\A} \right)^{-1} \hat{\A}^{\T} {\X}^{-0.5}{\S}^{0.5}\right) {\X}^{0.5}{\S}^{-0.5}
\\ \nonumber
& = & {\X}^{0.5}{\S}^{-0.5} {\Q}_1{\Q}_1^{\T} {\X}^{0.5}{\S}^{-0.5}.
\end{eqnarray}
Therefore,
\begin{equation}
{\p}_x = {\X}^{0.5}{\S}^{-0.5} {\Q}_1{\Q}_1^{\T} {\X}^{0.5}{\S}^{0.5}{\e},
\hspace{0.1in}
{\q}_x = {\mu} {\X}^{0.5}{\S}^{-0.5} {\Q}_1{\Q}_1^{\T} {\X}^{-0.5}{\S}^{-0.5}\e,
\label{pxqx}
\end{equation}
Similarly, we can write
\[
{\X}^{0.5}{\S}^{-0.5} {\A}^{\T} = {\Q}_2{\R}_2,
\]
where ${\Q}_2$ is an orthonormal matrix in ${\RR}^{n \times m}$, and ${\R}_2$ is an invertible triangle matrix in ${\RR}^{m \times m}$, 
\begin{eqnarray}
{\A}^{\T}\left({\A}{\S}^{-1}{\X} {\A}^{\T} \right)^{-1} {\A}
={\X}^{-0.5}{\S}^{0.5} {\Q}_2{\Q}_2^{\T} {\X}^{-0.5}{\S}^{0.5},
\end{eqnarray}
and
\begin{equation}
{\p}_s = {\X}^{-0.5}{\S}^{0.5} {\Q}_2{\Q}_2^{\T} {\X}^{0.5}{\S}^{0.5}{\e},
\hspace{0.1in}
{\q}_s = {\mu} {\X}^{-0.5}{\S}^{0.5} {\Q}_2{\Q}_2^{\T} {\X}^{-0.5}{\S}^{-0.5}\e,
\label{psqs}
\end{equation}

\begin{remark}
It is observed that formulas (\ref{pxqx}) and (\ref{psqs}) produce much more accurate result than (\ref{dotXdotS}) when iterations approach to the optimal solution. For sparse matrix $\A$, we can use sparse QR decomposition \cite{davis08} but we have not implemented yet.
\end{remark}

\subsection{Some Netlib test problems}

Numerical tests have been performed for linear programming problems
in Netlib library. For Netlib problems, \cite{cg06} has classified these problems into two categories: problems with strict interior-point and problems without strict interior-point. Though the newly developed
Matlab codes and other existing codes can solve problems without strict interior-point, we are most interested in the problems with strict interior-point that is assumed by all feasible interior-point methods. Among these problems, we only choose problems which are presented in standard form and their $\A$ matrices are full rank. The selected problems are solved using our Matlab function {\tt optimalAlphaSigma} and function {\tt linprog} in Matlab optimization toolBox. 

For several reasons, it is impossible to be completely fair in the comparison of the test results obtained by {\tt optimalAlphaSigma} and {\tt linprog}. First, there is no detail about the initial point selection in {\tt linprog}. Second, {\tt linprog} does not allow to start from user selected initial point other than the one provided by {\tt linprog}. Third, there is no information on what preprocessing is actually used before {\tt linprog} starts to run MPC, we only know from \cite{LMS91} that preprocessing ``generally increases computational efficiency, often substantial''. 

We compare the two codes simply by using the iteration numbers for the tested problem which are listed in table 1. Only two Netlib problems that are classified as problems with strict interior-point and are presented in standard form are not included in the table because our old PC computer used in the test does not have enough memory to handle problems of this size.

For all problems, {\tt optimalAlphaSigma} starts with ${\x}={\s}={\e}$. A preprocessing described in \cite{cg06} is used to find an initial point before Algorithm \ref{mainAlgo} runs. The initial point used in {\tt linprog} is said to be similar to the one used in \cite{LMS91} with some minor modifications (see \cite{zhang96}).

\begin{table}
\begin{center}
\caption{Iteration counts for test problems in Netlib and Matlab}
\begin{tabular}{|c|c|c|c|c|}
\hline 
        & \multicolumn{2}{l|} {iterations used by different algorithms}
        & 
        \\
 
Problem & optimalAlphaSigma & linprog &  source \\
\hline
AFIRO        & 4   & 7  &  netlib \\ \hline
blend        & 13  & 12  &  netlib \\ \hline
SCAGR25      & 5   & 16 &  netlib \\ \hline
SCAGR7       & 7   & 12 &  netlib \\ \hline
SCSD1        & 18  & 10 &  netlib \\ \hline
SCSD6        & 26  & 12 &  netlib \\ \hline
SCSD8        & 19  & 11 &  netlib \\ \hline
SCTAP1       & 17  & 17 &  netlib \\ \hline
SCTAP2       & 17  & 18 &  netlib \\ \hline
SCTAP3       & 18  & 18 &  netlib \\ \hline
SHARE1B      & 11  & 22 &  netlib \\ \hline
\end{tabular}
\end{center}
\end{table}
%\title{Iteration numbers of different Matlab codes}

This result is very impressive because {\tt optimalAlphaSigma} does not have a ``corrector step'' which is used by MPC and many other algorithms. Although corrector step is not as expensive as ``predictor step'', it still needs some substantial numerical operations.

\section{Conclusions}

In this paper, we have proposed a polynomial interior-point path-following algorithm that searches the optimizers in a neighborhood similar to the short-step algorithm. The algorithm is therefore polynomial with the best known complexity bound. But in every iteration, instead of small improvement in short-step algorithm, the algorithm minimizes the objective function (minimizes the duality gap), therefore, it achieves significantly better improvement in the neighborhood.
Preliminary numerical results on some Netlib problems show that the algorithm is very promising.

%\bibliographystyle{unsrt}
%\bibliography{Myrefs}

\end{document}